%
%
%

\documentclass[graybox]{svmult}


\usepackage{mathptmx}       
\usepackage{helvet}         
\usepackage{courier}        
\usepackage{type1cm}        
\usepackage{amssymb}
\usepackage{mathrsfs}
\usepackage{amsmath}

%
\usepackage{makeidx}         
\usepackage{graphicx}        
\graphicspath{{Figures/}}
\usepackage{multicol}        
\usepackage[bottom]{footmisc}


\makeindex             


\begin{document}

\title*{Pseudo-differential operators on $\mathbb{Z}^n$ with applications to discrete fractional integral operators }

\author{Duv\'an Cardona}
\institute{Duv\'an Cardona \at Pontificia Universidad Javeriana, Mathematics Department, Bogot\'a-Colombia. \email{cardonaduvan@javeriana.edu.co}}
%
\maketitle
\vspace{-2cm}

\abstract*{In this manuscript we provide necessary and sufficient conditions for the $\textnormal{weak}(1,p)$ boundedness, $1< p<\infty,$  of discrete Fourier multipliers (Fourier multipliers  on $\mathbb{Z}^n$). Our main goal is to apply the results obtained to discrete fractional integral operators. Discrete versions of the Calder\'on-Vaillancourt Theorem and the Gohberg Lemma for pseudo-differential operators also are proved. MSC2010: 42B15 (primary), 11P05 (secondary). Keywords: Pseudo-differential operators, Calder\'on-Vaillancourt Theorem, Gohberg Lemma, Discrete fractional integral operator, Hypothesis-$K^*,$ Waring's problem. }

\abstract{In this manuscript we provide necessary and sufficient conditions for the $\textnormal{weak}(1,p)$ boundedness, $1< p<\infty,$  of discrete Fourier multipliers  (Fourier multipliers  on $\mathbb{Z}^n$).  Our main goal is to apply the results obtained to discrete fractional integral operators. Discrete versions of the Calder\'on-Vaillancourt Theorem and the Gohberg Lemma also are proved. MSC2010: 42B15 (primary), 11P05 (secondary).  Keywords: Pseudo-differential operators, Calder\'on-Vaillancourt Theorem, Gohberg Lemma, Discrete fractional integral operator, Hypothesis-$K^*,$ Waring's problem. \\
Submitted exclusively to the {\it{Bulletin of the Iranian Mathematical Society}.}}

\section{Introduction}

\noindent{  {\textbf{Outline of the paper.}}}
In this paper we characterise the $\textnormal{weak}(1,p)$-inequality, $1< p<\infty,$ for multipliers on $\mathbb{Z}^n$ (see \eqref{Zn}). We also prove
discrete versions of the Calder\'on-Vaillancourt Theorem and the Gohberg Lemma
for pseudo-differential operators on $\mathbb{Z}^n.$ Then we apply our results to certain discrete operators called discrete fractional integral operators. Fourier multipliers on $\mathbb{Z}^n$ are defined by the integral representation
\begin{equation}\label{Zn}
  t_m f(n'):=\int\limits_{\mathbb{T}^n} e^{i2\pi n'\cdot\xi}m(\xi)(\mathscr{F}f)(\xi)d\xi,\,\,\,f\in \mathscr{S}(\mathbb{Z}^n),\,n'\in\mathbb{Z}^n,
\end{equation}
where $\mathbb{T}^n=[0,1]^n,$ and $\mathscr{F}:\mathscr{S}(\mathbb{Z}^n)\rightarrow C^{\infty}(\mathbb{T}^n)$ is the discrete Fourier transform, defined on the discrete Schwartz class by
\begin{equation}
  \mathscr{F}f(\xi):=\sum_{n'\in\mathbb{Z}^n}e^{-i2\pi n'\cdot \xi}f(n').
\end{equation}
Same as in the euclidian case, Fourier multipliers on $\mathbb{Z}^n$  are convolution operators and they commute with the group of translations $\{\tau_k\}_{k\in\mathbb{Z}^n},$ where $\tau_kf=f(\cdot-k).$   In \eqref{Zn}, the function $m$ is usually called the symbol of the operator $t_m.$

$\ell^p$-estimates are results of boundedness in $\ell^p$-spaces of suitable linear operators. Although the $L^p$-estimates for multipliers on $\mathbb{R}^n,$ are motivated by its applications to PDE's, (see, e.g., H\"ormander\cite{Hormander1960}) $\ell^p$-bounds for multipliers on $\mathbb{Z}^n$ can be applied to discrete problems, ones arising from difference equations and others from number theory (see \cite{Pie,Ruzhansky}). Taking this into account,  multipliers on $\mathbb{Z}^n$ have been studied principally from three sources. The first one and more classical is as elements in the classic Fourier analysis (see \cite{Duo,GrafakosBook}) as well as discrete counterparts of Calder\'on-Zygmund singular integral operators:
\begin{equation}\label{CZO}
  Tf(x)=\int\limits_{\mathbb{R}^n}k(x-y)f(y)dy,\,\,\,f\in \mathscr{S}(\mathbb{R}^n)
\end{equation}
where the kernel $k$ is singular on the diagonal. In fact, the discrete counterpart of \eqref{CZO} is defined by
\begin{equation}\label{DCZO}
   tf(n')=\sum_{m\in\mathbb{Z}^n,m\neq n'}k(n'-m)f(m),\,\,\,f\in \mathscr{S}(\mathbb{Z}^n).
\end{equation}
In this case, $k(0)=0, and $ the symbol of $t$ is given by $m:=\mathscr{F}k,$ that is the discrete Fourier transform of the kernel. A remarkable result proved in \cite{CalderonZygmund} shows that the $L^p$-boundedness of an operator $T$ implies the $\ell^p$-boundedness of its discrete counterpart $t$. The reference \cite{riesz} includes the same result for the Hilbert transform and the discrete Hilbert transform.

The second viewpoint, in a more recent context, includes discrete multipliers as fundamental examples in the theory of pseudo-differential operators on $\mathbb{Z}^n,$ introduced  in Molahajloo\cite{m}, and developed in the last years by Catana, Delgado, Ghaemi, Jamalpour Birgani, Nabizadeh, Rodriguez and Wong in the references \cite{Cat14,DW,GBN,rod},  as well as in the fundamental work \cite{Ruzhansky} by L. Botchway, G. Kibiti, and M. Ruzhansky where the theory has been motivated by its potential applications to difference equations. In this setting pseudo-differential operators on $\mathbb{Z}^n$ are defined by the integral form
\begin{equation}\label{pseudo}
  t_m f(n'):=\int\limits_{\mathbb{T}^n} e^{i2\pi n'\cdot\xi}m(n',\xi)(\mathscr{F}f)(\xi)d\xi,\,\,\,f\in \mathscr{S}(\mathbb{Z}^n),\,n'\in\mathbb{Z}^n.
\end{equation}
The references Rabinovich\cite{Rab1,Rab2}, and Rabinovich and Roch\cite{Rab3,Rab4} can be think as predecessor works of this subject.  The last approach  is centered in  particular cases of discrete  multipliers and summarized, for example, in the thesis of L.B. Pierce \cite{Pie}. An important type of such operators are called discrete fractional integral operators, and defined by the expression
\begin{equation}\label{FIO}
  I_{k,\lambda+i\gamma}f(n')=\sum_{m=1}^{\infty}\frac{f(n'-m^k)}{m^{\lambda+i\gamma}},\,\,\,f\in \mathscr{S}(\mathbb{Z}),\,n'\in\mathbb{Z}.
\end{equation}
These are discrete analogues of Hardy-Littlewood fractional integral operators of the form
\begin{equation}
  I'_{k,\lambda+i\gamma}f(x)=\int\limits_{1}^{\infty}\frac{f(x-y^k)}{y^{\lambda+i\gamma}}dy.
\end{equation}
The parameter $k$ is a natural number while $0<\lambda\leq 1$ and $\gamma\in\mathbb{R}.$ The attention to $\ell^p$-estimates for the discrete operators $I_{k,\lambda+i\gamma}$ is justified by its nice connections to important problems in number theory as the Waring problem. We refer the reader to \cite{Pie2,st3,Hooley} for a precise discussion in relation with the  Hypothesis-$K^*$ in Waring's problem. This conjecture remains unproved for $k\geq 3$.  Operators as in \eqref{FIO} are multipliers on $\mathbb{Z}^n$ in the sense that the symbol associated to a fixed operator $I_{k,\lambda+i\gamma}$ is given by
\begin{equation}\label{symbol}
  m_{k,\lambda+i\gamma}(\xi)=\sum_{m=1}^{\infty}\frac{e^{-i2\pi m^k\xi}}{m^{\lambda+i\gamma}}.
\end{equation}
Properties of discrete fractional integral operators (and other discrete operators) in $\ell^p$ spaces have been considered in the early works of Stein-Wainger\cite{st,st2,st3}, the thesis of L. B. Pierce\cite{Pie}, Hughes \cite{tn}, and the references \cite{carro,k} and \cite{Pie2}. In particular, a result by Ionescu and Wainger\cite{IW} establish the $\ell^p$-boundedness of $I_{k,\lambda+i\gamma},$ $1<p<\infty,$ for $\lambda=1$ and $\gamma\neq 0.$ The following conjecture is well known in the setting of discrete fractional integral operators (see \cite{Pie2}).\\

\noindent {\bf Conjecture 1}. For $0<\lambda<1$ and $k\in\mathbb{N}$, $I_{k,\lambda}$ extends to a bounded operator from $\ell^q$ into $\ell^p,$ $1\leq q<p<\infty,$  if and only if $p,q$ satisfy
\begin{itemize}
  \item $\frac{1}{p}\leq \frac{1}{q}-\frac{1-\lambda}{k}$,
  \item $\frac{1}{p}<\lambda$ and $\frac{1}{q}>1-\lambda.$
\end{itemize}
We refer the reader to L. B. Pierce \cite{Pie2} where $\ell^q-\ell^p$ estimates of the following type have been proved solving important cases of Conjecture 1:
estimates of the following
type have been proved solving important partial cases of Conjecture 1:
\begin{itemize}
    \item $\frac{1}{p}\leq \frac{1}{q}-\frac{1-\lambda}{\alpha(k)}$, $1-\beta(k)<\lambda<1,$
\end{itemize} where the parameters $\alpha(k)$ and $\beta(k)$ are depending on $p,q$ and $k.$ We refer to \cite[p. 3]{Pie2} for details and to \cite{st3} for sharp expressions of such parameters when $k=2.$ The case $k\geq 4 $ was treated in \cite{Pie} by using the circle method of Hardy-Ramanujan-Littlewood. Let us observe that we have used the order $(\ell^q,\ell^p)$ instead of $(\ell^p,\ell^q)$
used in the previous references.

\noindent{  {\textbf{Main results.}}} In this paper, we  determinate those conditions on the symbol $m$ in order that the operator $t_m$ can be extended to a bounded operator from $\ell^1(\mathbb{Z}^n)$ into $\ell^{p,\infty}(\mathbb{Z}^n),$ $1< p<\infty;$ this means that the following inequality
\begin{equation}\label{weak1p}
 \Vert t_mf \Vert_{\ell^{p,\infty}}:=\sup_{\alpha>0}\alpha\cdot \mu\{n:|t_mf(n)|>\alpha\}^{\frac{1}{p}}\leq C\Vert f\Vert_{\ell^1}
\end{equation}
remains valid for some constant $C>0$ and all $f\in\ell^1.$ We have denoted by $\mu$ the counting measure on the discrete space $\mathbb{Z}^n.$ Then we will apply the obtained results to the operators $I_{k,\lambda+i\gamma}.$ In Theorem \ref{t1dc}, we present the following characterisation for $t_m$ in terms of the inverse Fourier transform $k:=\mathscr{F}^{-1}m$ of $m,$
\begin{itemize}
    \item $t_m:\ell^1(\mathbb{Z}^n)\rightarrow\ell^{p,\infty}(\mathbb{Z}^n)$ extends to a bounded operator if and only if $k:=\mathscr{F}^{-1}m\in \ell^{p,\infty}(\mathbb{Z}^n).$ Moreover, $
    \Vert k\Vert_{\ell^{p,\infty}}= \Vert t_m \Vert_{\mathscr{B}(\ell^1,\ell^{p,\infty})}$.
  \item $t_m:\ell^1(\mathbb{Z}^n)\rightarrow\ell^{p}(\mathbb{Z}^n)$ extends to a bounded operator if and only if $k:=\mathscr{F}^{-1}m\in \ell^{p}(\mathbb{Z}^n).$ Moreover, $
    \Vert k\Vert_{\ell^{p}}= \Vert t_m \Vert_{\mathscr{B}(\ell^{1},\ell^p)}$.
  \end{itemize}
This result is a discrete version of the characterisation for the weak $(1,p)$ inequality for multipliers on $\mathbb{R}^n$ due to Stepanov-Haydy-Littlewood and Sobolev(see \cite{Stepanov}). As a consequence of the previous result we obtain our main result for discrete fractional integral operators.
\begin{theorem}\label{T2}
 Let us consider $1< p<\infty,$ $k\in\mathbb{N},$ $0<\lambda<1,$  and $\gamma\in \mathbb{R}.$ Then,
 \begin{itemize}
 \item $I_{k,\lambda+i\gamma}$ is of weak type $(1,p)$,
  if and only if $\lambda\geq \frac{1}{p}.$
 \item $I_{k,\lambda+i\gamma}$ is bounded from  $\ell^1$ into $\ell^p$
  if and only if $\lambda> \frac{1}{p}.$
 \end{itemize}

\end{theorem}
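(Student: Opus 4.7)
\textbf{Proof plan for Theorem \ref{T2}.}

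The plan is to reduce the question directly to the characterisation in Theorem \ref{t1dc}, so the whole argument amounts to identifying the convolution kernel of $I_{k,\lambda+i\gamma}$ and computing two very classical norms of it. First I will observe that $I_{k,\lambda+i\gamma}$ is the Fourier multiplier with symbol $m_{k,\lambda+i\gamma}$ given in \eqref{symbol}, so by the inversion formula its convolution kernel is $K:=\mathscr{F}^{-1}m_{k,\lambda+i\gamma}$, supported on positive $k$-th powers, with
\[
K(j)=\frac{1}{m^{\lambda+i\gamma}}\quad\text{if }j=m^{k},\ m\in\mathbb{N},\qquad K(j)=0\text{ otherwise}.
\]
This can be read off the defining sum $I_{k,\lambda+i\gamma}f(n')=\sum_{m\geq 1}m^{-(\lambda+i\gamma)}f(n'-m^{k})$ for $f\in\mathscr{S}(\mathbb{Z})$, where the series converges absolutely because $f$ is rapidly decreasing; alternatively one interchanges sum and integral termwise against finitely supported test functions. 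Note that $|K(j)|$ depends only on $\lambda$, not on $\gamma$, which is why the imaginary part of the exponent plays no role in the final conclusion.

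Next I will apply Theorem \ref{t1dc} to reduce the boundedness question to an integrability question for $K$. The strong $\ell^{p}$ case is immediate: since the $k$-th powers are distinct,
\[
\|K\|_{\ell^{p}}^{p}=\sum_{m=1}^{\infty}m^{-\lambda p},
\]
which is finite if and only if $\lambda p>1$, giving the equivalence $I_{k,\lambda+i\gamma}\colon\ell^{1}\to\ell^{p}$ bounded $\iff\lambda>1/p$. For the weak $(1,p)$ statement I will compute the distribution function: for $0<\alpha<1$,
\[
\mu\{n\in\mathbb{Z}:|K(n)|>\alpha\}=\#\{m\geq 1:m^{-\lambda}>\alpha\}=\lfloor\alpha^{-1/\lambda}\rfloor,
\]
and the set is empty for $\alpha\geq 1$. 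Therefore
\[
\|K\|_{\ell^{p,\infty}}=\sup_{0<\alpha<1}\alpha\,\lfloor\alpha^{-1/\lambda}\rfloor^{1/p},
\]
which is comparable to $\sup_{0<\alpha<1}\alpha^{\,1-1/(\lambda p)}$ and is finite if and only if $1-1/(\lambda p)\geq 0$, i.e.\ $\lambda\geq 1/p$. Combining this with Theorem \ref{t1dc} yields the weak $(1,p)$ equivalence.

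I do not anticipate any significant obstacle: the only place that calls for a moment of care is the justification that $\mathscr{F}^{-1}m_{k,\lambda+i\gamma}$ really is the pointwise kernel described above (since for $\lambda$ near $0$ the symbol is only a conditionally convergent series on $\mathbb{T}$), but this is settled by testing against $f\in\mathscr{S}(\mathbb{Z})$, where the convolution sum converges absolutely and coincides with the multiplier action. After that identification, the theorem follows from a direct count of $k$-th powers and a one-line supremum. The fact that the endpoint $\lambda=1/p$ is captured by the weak norm but excluded from the strong norm is exactly the difference between $\sum m^{-\lambda p}<\infty$ and the supremum $\alpha^{1-1/(\lambda p)}$ being bounded.
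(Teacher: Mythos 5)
Your proposal is correct and follows essentially the same route as the paper: identify the convolution kernel supported on the $k$-th powers, compute its $\ell^{p}$ norm (a $p$-series, finite iff $\lambda p>1$) and its distribution function ($\lfloor\alpha^{-1/\lambda}\rfloor$ for $0<\alpha<1$, giving finiteness of the weak norm iff $\lambda p\geq 1$), and invoke Theorem \ref{t1dc}. Your remark that the supremum is only \emph{comparable} to $\sup_{0<\alpha<1}\alpha^{1-1/(\lambda p)}$ is in fact slightly more careful than the paper's own computation, which writes this as an equality.
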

Let us observe that the second assertion of Theorem \ref{T2} show that the  Conjecture 1 holds true in the partial case $q=1$ and $1<p<\infty$ because $\frac{1}{p}<\lambda\leq1-\frac{1-\lambda}{k}, $ for all $k\geq 1.$

To determinate sharp conditions for the $\ell^2$-boundedness of pseudo-differential operators on $\mathbb{Z}^n$ we will prove a discrete version of a classical result due to A. Calder\'on and R. Vaillancourt, to do so our main tool is the following result:
\begin{itemize}
  \item the $L^2(\mathbb{R}^n)$-boundedness of a pseudo-differential operator on $\mathbb{R}^n,$ defined by
\begin{equation}\label{pseudorn}
  Tf(x)=\int_{\mathbb{R}^n}e^{i2\pi x\xi}a(x,\xi)\widehat{f}(\xi)\,d\xi,\,\,\,f\in\mathscr{S}(\mathbb{R}^n),
\end{equation} (here $\widehat{f}(\xi)=(\mathscr{F}_{\mathbb{R}^n}f)(\xi)=\int_{\mathbb{R}^n}e^{i\pi x\xi}f(x)dx,$ denotes the  Fourier transform of $f$) implies the $\ell^2(\mathbb{Z}^n)$-boundedness of its discrete analogue
 \begin{equation}\label{pseudornzn}
  t_mf(n')=\int_{\mathbb{T}^n}e^{i2\pi x\xi}m(n',\xi)(\mathscr{F}{f})(\xi)\,d\xi,\,\,\,f\in\mathscr{S}(\mathbb{Z}^n),\,\,m(n',\xi)=\overline{a(\xi,-n')},
\end{equation} where the symbol $a$ is assumed to be  a continuous function.
\end{itemize}
  Consequently, we prove the following theorem.
  \begin{theorem}[Calder\'on-Vaillancourt, discrete version]
    Let us assume that $t_m$ is a pseudo-differential operator on $\mathbb{Z}^n.$ Then, under the condition \begin{equation}\label{calderon}
  |\partial_x^\beta\Delta_\xi^\alpha m(x,\xi)|\leq C_{\alpha,\beta}(1+|\xi|)^{(|\beta|-|\alpha|)\rho},\,\,0\leq \rho<1\,\,
\end{equation} the operator $ t_m $ extends to a bounded operator on $\ell^2(\mathbb{Z}^n).$
  \end{theorem}

An analogue result to the previous theorem has been proved in the work \cite{Ruzhansky} by L. Botchway, G. Kibiti, and M. Ruzhansky, where conditions of the type $|\partial_x^\beta m(x,\xi)|\leq C_\beta,$ $|\beta|\leq [\frac{n}{2}]+1,$ are imposed on the symbols in order to obtain the $\ell^2$-boundedness of pseudo-differential operators. As an immediate consequence of the techniques developed in \cite{Ruzhansky} we present the following characterisation for the $\ell^2$-compactness of discrete operators:
\begin{itemize}
  \item (Gohberg Lemma, discrete version) a pseudo-differential operator $t_m$ extends to a compact operator on $\ell^2(\mathbb{Z}^n)$ if and only if
      \begin{eqnarray}
      \limsup_{|n'|\rightarrow\infty}\sup_{\xi\in\mathbb{T}^n}|m(n',\xi)|=0.
      \end{eqnarray}
\end{itemize} 
This paper is organized as follows. In the next section we include some preliminaries and later  we prove our main results concerning to multipliers and discrete fractional operators. We end Section \ref{sectionweak} with Remark \ref{Rem} where we discuss our results in relation with the Hypothesis $K^*$ conjectured by Hooley on the Waring problem. Finally, in Section \ref{sectionpseudo} we prove our discrete versions of the Calder\'on-Vaillancourt theorem and the Gohberg Lemma.

\section{ $\textnormal{Weak-}\ell^p$ estimates for multipliers }\label{sectionweak}

In this section we prove our main theorems. However, we recall some basics on Lebesgue spaces and weak Lebesgue spaces on $\mathbb{Z}^n.$
For every $n\in\mathbb{N},$ we denote by $\ell^p(\mathbb{Z}^n)$ to the set of complex functions on $\mathbb{Z}^n$ satisfying
\begin{equation}
  \Vert f\Vert_{\ell^p}:=(\sum_{n'\in\mathbb{Z}^n}|f(n')|^p)^{\frac{1}{p}}<\infty,
\end{equation}
if $1\leq p<\infty.$ The $\textnormal{weak-}\ell^p$ space on $\mathbb{Z}^n,$ $\ell^{p,\infty}(\mathbb{Z}^n),$ $1\leq p<\infty,$ is defined by those functions $f$ on $\mathbb{Z}^n$ satisfying
\begin{equation}
 \Vert f\Vert_{\ell^{p,\infty}} :=\sup_{\alpha>0}\alpha \mu\{n':|f(n')|>\alpha  \}^{\frac{1}{p}}<\infty.
\end{equation} If we fix $0<r<p,$ we have the following seminorm on $\ell^{p,\infty},$ (see e.g., L. Grafakos \cite{GrafakosBook}, p. 13):
\begin{equation}\label{semi}
  \Vert f \Vert'_{\ell^{p,\infty}}:=\sup_{E\subset \mathbb{Z}^n,\,1\leq \mu(E)<\infty}\mu(E)^{\frac{1}{p}-\frac{1}{r}}(\sum_{n'\in E}|f(n')|^r)^{\frac{1}{r}}
\end{equation}
which satisfies
\begin{equation}
  \Vert f\Vert_{\ell^{p,\infty}}\leq \Vert f \Vert'_{\ell^{p,\infty}}\leq (\frac{p}{p-r})^{\frac{1}{r}}\Vert f\Vert_{\ell^{p,\infty}}.
\end{equation}
Now, we start with the following lemma.
\begin{lemma}\label{Lemma1}
  Let us consider the multiplier $t_m$ with symbol associated $m$ defined on $\mathbb{Z}^n.$ If $t_m:\ell^1\rightarrow \ell^{p,\infty}$ is bounded, then $k\in\ell^{p,\infty}$ and \begin{equation}\label{T11}
         \Vert k\Vert_{\ell^{p,\infty}}  \leq  \Vert t_m\Vert_{\mathscr{B}(\ell^1,\ell^{p,\infty})}<\infty.
       \end{equation}
\end{lemma}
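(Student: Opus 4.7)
The plan is to exploit the convolution structure of Fourier multipliers: since $t_m$ is a multiplier with symbol $m$, its kernel $k := \mathscr{F}^{-1}m$ realizes $t_m$ as convolution with $k$. Concretely, I would test the operator against the Dirac delta $\delta_0 \in \ell^1(\mathbb{Z}^n)$ defined by $\delta_0(0)=1$ and $\delta_0(n')=0$ for $n'\neq 0$. Clearly $\|\delta_0\|_{\ell^1}=1$, so boundedness of $t_m$ gives
\begin{equation*}
\|t_m \delta_0\|_{\ell^{p,\infty}} \leq \|t_m\|_{\mathscr{B}(\ell^1,\ell^{p,\infty})} \|\delta_0\|_{\ell^1} = \|t_m\|_{\mathscr{B}(\ell^1,\ell^{p,\infty})}.
\end{equation*}

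The crux of the proof is the computation $t_m\delta_0 = k$. Using $\mathscr{F}\delta_0(\xi) = \sum_{m\in\mathbb{Z}^n} e^{-i2\pi m\cdot\xi}\delta_0(m) = 1$ for every $\xi\in\mathbb{T}^n$, the definition of $t_m$ in equation \eqref{Zn} yields
\begin{equation*}
t_m\delta_0(n') = \int_{\mathbb{T}^n} e^{i2\pi n'\cdot\xi} m(\xi)\cdot 1\, d\xi = (\mathscr{F}^{-1}m)(n') = k(n').
\end{equation*}
Combining these two displays immediately gives $\|k\|_{\ell^{p,\infty}} \leq \|t_m\|_{\mathscr{B}(\ell^1,\ell^{p,\infty})} < \infty$, and in particular $k\in \ell^{p,\infty}(\mathbb{Z}^n)$.

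There is no real obstacle here; the only subtlety worth noting is that $\delta_0$ lies in the discrete Schwartz class $\mathscr{S}(\mathbb{Z}^n)$ on which $t_m$ is a priori defined by \eqref{Zn}, so the identity $t_m\delta_0 = k$ is a literal computation rather than a limiting argument. If one wished to avoid even this mild appeal to the Schwartz class, the same identity could be obtained by writing $\mathscr{F}\delta_0 \equiv 1$ distributionally on $\mathbb{T}^n$ and using the Fourier inversion formula. The matching lower bound giving equality in \eqref{T11} will presumably be addressed in the reverse direction of the characterization rather than in this lemma.
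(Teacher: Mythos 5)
Your proof is correct and follows essentially the same route as the paper: both test $t_m$ on the Dirac delta at the origin (the paper writes it as the characteristic function $1_{\{0\}}$), observe that $t_m\delta_0=k$, and read off $\Vert k\Vert_{\ell^{p,\infty}}\leq \Vert t_m\Vert_{\mathscr{B}(\ell^1,\ell^{p,\infty})}$ from the operator bound. The only cosmetic difference is that you verify $t_m\delta_0=k$ by computing $\mathscr{F}\delta_0\equiv 1$ and applying Fourier inversion, whereas the paper invokes the convolution identity $k\ast\delta_0=k$.
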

\begin{proof}
  If $t_m:\ell^1\rightarrow \ell^{p,\infty}$ is bounded, we write
  \begin{equation}
    \mu\{s: |t_mf(s)|>\alpha  \}\leq(C\Vert f \Vert_{\ell^1}/\alpha)^p,\,\,C:=\Vert t_m\Vert_{\mathscr{B}(\ell^1,\ell^{p,\infty})}.
  \end{equation} In particular, if  $f=1_{\{0\}}$ is the characteristic function of the set $\{0\},$ then $t_mf=k\ast f=k.$ So,   we obtain
  \begin{align*}
    &\mu\{s: |t_mf(s)|>\alpha  \}=\mu\{s: |k(s)|>\alpha  \}\leq(C/\alpha)^p,\,\,C:=\Vert t_m\Vert_{\mathscr{B}(\ell^1,\ell^{p,\infty})}.
  \end{align*}
  Thus, we obtain
  \begin{equation}
         \Vert k\Vert_{\ell^{p,\infty}}  \leq  \Vert t_m\Vert_{\mathscr{B}(\ell^1,\ell^{p,\infty})}<\infty.
       \end{equation}
   So, we end the proof.
\end{proof}

Now, we prove  the following theorem.
\begin{theorem}\label{t1dc}
   Let $1< p<\infty$ and let us consider a multiplier $t_m$ with associated symbol $m.$ Let us consider the convolution kernel of $t_m$ given by the inverse Fourier transform $k:=\mathscr{F}^{-1}m$ of m.
  Then
  \begin{itemize}
    \item $t_m:\ell^1(\mathbb{Z}^n)\rightarrow\ell^{p,\infty}(\mathbb{Z}^n)$ extends to a bounded operator if and only if $k:=\mathscr{F}^{-1}m\in \ell^{p,\infty}(\mathbb{Z}^n).$ Moreover, $
    \Vert k\Vert_{\ell^{p,\infty}}= \Vert t_m \Vert_{\mathscr{B}(\ell^1,\ell^{p,\infty})}$.
  \item $t_m:\ell^1(\mathbb{Z}^n)\rightarrow\ell^{p}(\mathbb{Z}^n)$ extends to a bounded operator if and only if $k:=\mathscr{F}^{-1}m\in \ell^{p}(\mathbb{Z}^n).$ Moreover, $
    \Vert k\Vert_{\ell^{p}}= \Vert t_m \Vert_{\mathscr{B}(\ell^{1},\ell^p)}$.
  \end{itemize}
\end{theorem}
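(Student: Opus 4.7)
\textbf{Plan for the proof of Theorem \ref{t1dc}.} The necessity direction in both bullets is already covered by Lemma \ref{Lemma1}: testing $t_m$ on $f=1_{\{0\}}$ produces $t_m f = k\ast 1_{\{0\}} = k$, so $\|k\|_{\ell^{p,\infty}} \leq \|t_m\|_{\mathscr{B}(\ell^1,\ell^{p,\infty})}$, and the same test gives $\|k\|_{\ell^p} \leq \|t_m\|_{\mathscr{B}(\ell^1,\ell^p)}$. So the bulk of the work is the sufficiency and the matching upper bound on the operator norm. The structural observation that drives everything is that $t_m$ is a convolution operator with kernel $k = \mathscr{F}^{-1}m$: for $f \in \ell^1$,
\begin{equation*}
  t_m f(n') = (k\ast f)(n') = \sum_{m\in\mathbb{Z}^n} f(m)\,\tau_m k(n'),
\end{equation*}
and, crucially, every translate $\tau_m k$ has the same decreasing rearrangement as $k$, hence $\|\tau_m k\|_{\ell^{p,\infty}} = \|k\|_{\ell^{p,\infty}}$ and $\|\tau_m k\|_{\ell^p} = \|k\|_{\ell^p}$.

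For the second bullet (strong $\ell^p$), Minkowski's inequality for $\ell^p$ applied to the above sum gives immediately
\begin{equation*}
  \|t_m f\|_{\ell^p} \leq \sum_{m} |f(m)|\,\|\tau_m k\|_{\ell^p} = \|k\|_{\ell^p}\,\|f\|_{\ell^1},
\end{equation*}
so $\|t_m\|_{\mathscr{B}(\ell^1,\ell^p)} \leq \|k\|_{\ell^p}$, which combined with Lemma \ref{Lemma1} yields the claimed equality. For the first bullet (weak $\ell^{p,\infty}$), the quasi-norm $\|\cdot\|_{\ell^{p,\infty}}$ does not satisfy the triangle inequality, so I would work instead with the equivalent norm $\|\cdot\|'_{\ell^{p,\infty}}$ introduced in \eqref{semi}, which \emph{does} satisfy the triangle inequality. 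Translation invariance of the counting measure gives $\|\tau_m k\|'_{\ell^{p,\infty}} = \|k\|'_{\ell^{p,\infty}}$, so Minkowski in this norm yields
\begin{equation*}
  \|t_m f\|'_{\ell^{p,\infty}} \leq \sum_m |f(m)|\,\|\tau_m k\|'_{\ell^{p,\infty}} = \|k\|'_{\ell^{p,\infty}}\,\|f\|_{\ell^1},
\end{equation*}
and the two-sided comparison between $\|\cdot\|_{\ell^{p,\infty}}$ and $\|\cdot\|'_{\ell^{p,\infty}}$ recalled just before Lemma \ref{Lemma1} transfers this back to the original quasi-norm.

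The main obstacle is the assertion of \emph{equality} of norms, rather than equivalence, in the weak case. The Minkowski argument above naturally produces a constant of the shape $\bigl(\tfrac{p}{p-r}\bigr)^{1/r}$ coming from the two comparisons, so to conclude $\|t_m\|_{\mathscr{B}(\ell^1,\ell^{p,\infty})} = \|k\|_{\ell^{p,\infty}}$ one must be more careful. I expect the argument to proceed by noting that the lower bound of Lemma \ref{Lemma1} is sharp (it is an identity for $f=1_{\{0\}}$), and by sharpening the upper bound through a density reduction to finitely supported $f$ and a direct layer-cake / distribution function estimate for $k\ast f = \sum_m f(m)\tau_m k$, exploiting that the level sets $\{|k|>\alpha\}$ and their translates have identical measure, rather than routing through a non-sharp equivalent norm. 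This sharpness step is the delicate point; the rest is essentially Young's inequality for convolutions.
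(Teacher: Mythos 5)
Your plan follows essentially the same route as the paper's proof: necessity in both bullets by testing on $f=1_{\{0\}}$ (Lemma \ref{Lemma1}), sufficiency in the strong case by Young's inequality (your Minkowski computation is exactly that, and it does give the exact identity $\Vert k\Vert_{\ell^p}=\Vert t_m\Vert_{\mathscr{B}(\ell^1,\ell^p)}$), and sufficiency in the weak case by the weak Young inequality. The only difference is that you actually prove the weak Young inequality, by passing to the subadditive functional $\Vert\cdot\Vert'_{\ell^{p,\infty}}$ of \eqref{semi} (take $1\leq r<p$ there, so that it is genuinely a norm) and using translation invariance, whereas the paper cites it as known. This settles both ``if and only if'' assertions completely.

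The step you flag as delicate --- upgrading $\Vert t_m\Vert_{\mathscr{B}(\ell^1,\ell^{p,\infty})}\leq(\tfrac{p}{p-r})^{1/r}\Vert k\Vert_{\ell^{p,\infty}}$ to an exact equality of norms --- cannot be completed, because the claimed identity is false. The paper obtains it by asserting $\Vert k\ast f\Vert_{\ell^{p,\infty}}\leq\Vert k\Vert_{\ell^{p,\infty}}\Vert f\Vert_{\ell^1}$ with constant $1$, but the weak Young inequality does not hold with constant $1$. Take $n=1$, $p=2$, and $k(j)=(j+1)^{-1/2}$ for $0\leq j\leq 5$, $k(j)=0$ otherwise, so that $m=\mathscr{F}k$ is a trigonometric polynomial and $\Vert k\Vert_{\ell^{2,\infty}}=\sup_{1\leq m\leq 6}m^{1/2}\cdot m^{-1/2}=1$. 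With $f=\frac12(1_{\{0\}}+1_{\{1\}})$, so $\Vert f\Vert_{\ell^1}=1$, one has $(k\ast f)(j)=\frac12\bigl(k(j)+k(j-1)\bigr)$, whose values at $j=0,1,2,3,4$ are $0.5,\ 0.853\ldots,\ 0.642\ldots,\ 0.538\ldots,\ 0.473\ldots$, all exceeding $0.47$; hence
\begin{equation*}
\Vert k\ast f\Vert_{\ell^{2,\infty}}\ \geq\ 0.47\cdot\mu\{j:|(k\ast f)(j)|>0.47\}^{1/2}\ \geq\ 0.47\,\sqrt{5}\ >\ 1\ =\ \Vert k\Vert_{\ell^{2,\infty}}\Vert f\Vert_{\ell^1},
\end{equation*}
so $\Vert t_m\Vert_{\mathscr{B}(\ell^1,\ell^{2,\infty})}>\Vert k\Vert_{\ell^{2,\infty}}$. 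Your instinct that the constant coming from the equivalent norm is the honest output of the argument is therefore correct: the first bullet should be read with the two-sided equivalence $\Vert k\Vert_{\ell^{p,\infty}}\leq\Vert t_m\Vert_{\mathscr{B}(\ell^1,\ell^{p,\infty})}\leq\tfrac{p}{p-1}\Vert k\Vert_{\ell^{p,\infty}}$ in place of the stated equality, and you should not spend effort on the ``sharpness step.'' None of the applications in the paper (Theorem \ref{T2}, Remark \ref{Rem}) use more than the equivalence, so your proof suffices for everything downstream.
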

\begin{proof}
  Let us consider $1< p<\infty.$ In view of Lemma \eqref{Lemma1}, if $t_m:\ell^1\rightarrow \ell^{p,\infty}$ is bounded then \begin{equation}\label{step0}
         \Vert k \Vert_{\ell^{p,\infty}}=\Vert\mathscr{F}^{-1}(m)(\cdot) \Vert_{\ell^{p,\infty}}\leq \Vert t_m\Vert_{\mathscr{B}(\ell^1,\ell^{p,\infty})}.
       \end{equation}
  So, for the  proof, we only need to show that the condition  $k:=\mathscr{F}^{-1}m\in \ell^{p,\infty}(\mathbb{Z})$ implies the boundedness of $t_m $ from $\ell^1(\mathbb{Z}^n)$ into $\ell^{p,\infty}(\mathbb{Z}^n)$. So, let us assume $k:=\mathscr{F}^{-1}m\in \ell^{p,\infty}(\mathbb{Z}^n).$ We will show the inequality
  \begin{equation}\label{step1}
    \Vert t_mf(n)\Vert_{{\ell}^{p,\infty}} \leq C\Vert f\Vert_{\ell^1}, C=\Vert k \Vert_{\ell^{p,\infty}},
  \end{equation}
  for every $f\in\ell^1.$   But this is consequence of the weak Young inequality, (also called Hardy-Littlewood-Sobolev inequality)
  \begin{equation}\label{final}
    \Vert t_m f\Vert_{\ell^{p,\infty}}=\Vert k\ast f\Vert_{\ell^{p,\infty}}\leq \Vert k \Vert_{\ell^{p,\infty}}\Vert f\Vert_{\ell^1}
  \end{equation} because it implies
  \begin{equation}
    \Vert t_m\Vert_{\mathscr{B}(\ell^1,\ell^{p,\infty})}\leq \Vert k\Vert_{\ell^{p,\infty}}.
  \end{equation} So, we end the proof for the first item. For the second part, if $t_m:\ell^1\rightarrow\ell^p$ is bounded, then
  \begin{equation}\label{l1lp}
    \Vert t_m f\Vert_{\ell^p}\leq \Vert t_m\Vert_{\mathscr{B}(\ell^1,\ell^{p})}\Vert f\Vert_{\ell^1}.
  \end{equation} In particular, if $f=1_{ \{0\} },$ $t_mf=k $ and we obtain $\Vert k\Vert_{\ell^p}\leq \Vert t_m\Vert_{\mathscr{B}(\ell^1,\ell^{p})}.$ For the converse assertion, we only need to apply the Young inequality, in fact the estimate
\begin{equation}\label{final2}
    \Vert t_m f\Vert_{\ell^{p}}=\Vert k\ast f\Vert_{\ell^{p}}\leq \Vert k \Vert_{\ell^{p}}\Vert f\Vert_{\ell^1}
  \end{equation} implies  $\Vert k\Vert_{\ell^p}\geq \Vert t_m\Vert_{\mathscr{B}(\ell^1,\ell^{p})}.$ So, we finish the proof of the theorem.
\end{proof}

Now, we apply the above result  to the analysis of discrete fractional integral operators.

\begin{theorem}
 Let us consider $1< p<\infty,$ $k\in\mathbb{N},$ $0<\lambda\leq 1,$  and $i\gamma\in i\mathbb{R}.$ Then,
 \begin{itemize}
   \item  $I_{k,\lambda+i\gamma}$ is of weak type $(1,p)$,
  if and only if $\lambda\geq \frac{1}{p}.$
   \item  $I_{k,\lambda+i\gamma}$ is bounded from $\ell^1$ into  $\ell^p$,
  if and only if $\lambda> \frac{1}{p}.$
 \end{itemize}

\end{theorem}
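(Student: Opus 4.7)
The plan is to invoke Theorem~\ref{t1dc} to translate the $\ell^1 \to \ell^{p,\infty}$ (respectively $\ell^1 \to \ell^p$) boundedness of $I_{k,\lambda+i\gamma}$ into a size condition on its convolution kernel, and then compute that size explicitly. Since the operator $I_{k,\lambda+i\gamma}$ is already given as a convolution, namely $I_{k,\lambda+i\gamma}f = K\ast f$ where
\begin{equation*}
    K(n') = \sum_{m=1}^{\infty} \frac{1}{m^{\lambda+i\gamma}}\, \mathbf{1}_{\{m^k\}}(n'),
\end{equation*}
we can read off the values of $K$: for $n' \in \mathbb{N}$ with $n' = m^k$ for some $m\geq 1$ we have $|K(n')| = m^{-\lambda}$, and $K(n')=0$ otherwise. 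In particular the imaginary part $i\gamma$ plays no role in the modulus and hence will not appear in any of the sharp thresholds.

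First I would compute the distribution function of $|K|$. For $\alpha>0$,
\begin{equation*}
    \mu\{n' \in \mathbb{Z} : |K(n')|>\alpha\} = \#\{m\geq 1 : m^{-\lambda}>\alpha\} = \lfloor \alpha^{-1/\lambda}\rfloor,
\end{equation*}
which is comparable to $\alpha^{-1/\lambda}$ as $\alpha \to 0^+$ and vanishes once $\alpha>1$. Consequently
\begin{equation*}
    \alpha\, \mu\{n' : |K(n')|>\alpha\}^{1/p} \asymp \alpha^{\,1 - \frac{1}{p\lambda}},\qquad 0<\alpha\leq 1,
\end{equation*}
which is uniformly bounded in $\alpha$ if and only if $1 - \tfrac{1}{p\lambda}\geq 0$, i.e.\ $\lambda\geq \tfrac{1}{p}$. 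Hence $K\in \ell^{p,\infty}(\mathbb{Z})$ if and only if $\lambda \geq 1/p$, and Theorem~\ref{t1dc} gives the first assertion.

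For the second assertion I would compute the $\ell^p$ norm directly:
\begin{equation*}
    \Vert K\Vert_{\ell^p}^p = \sum_{m=1}^{\infty} \frac{1}{m^{\lambda p}},
\end{equation*}
which is finite if and only if $\lambda p>1$, i.e.\ $\lambda>1/p$. Applying the second item of Theorem~\ref{t1dc} yields the claim.

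There is no real obstacle here: all the work has been done in Theorem~\ref{t1dc}, which converts the problem into a one-line computation for the kernel of a convolution operator whose values on the sparse set $\{m^k : m\in\mathbb{N}\}$ are explicit. The only minor point to watch is that the $\ell^{p,\infty}$ quasi-norm must be bounded uniformly in $\alpha>0$ (not only in a limit), which is why the endpoint $\lambda = 1/p$ is allowed in the weak-type statement but excluded in the strong-type statement.
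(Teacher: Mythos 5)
Your proposal is correct and follows essentially the same route as the paper: reduce to Theorem \ref{t1dc}, identify the convolution kernel supported on the $k$-th powers with $|K(m^k)|=m^{-\lambda}$, and check that $K\in\ell^{p,\infty}$ iff $\lambda\geq 1/p$ (via the distribution function $\lfloor\alpha^{-1/\lambda}\rfloor$) and $K\in\ell^p$ iff $\lambda>1/p$ (via $\sum_m m^{-\lambda p}$). Your remark that $\gamma$ is irrelevant because only $|K|$ enters, and your explanation of why the endpoint $\lambda=1/p$ survives in the weak-type case but not the strong-type case, are both accurate and consistent with the paper's computation.
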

\begin{proof}
  We want to proof the assertion by using Theorem \ref{t1dc}. The convolution kernel of $I_{k,\lambda+i\gamma},$ which we denote by $k_{k,\lambda+i\gamma},$ is defined on $\mathbb{Z}$ by: $k_{k,\lambda+i\gamma}(m^k)=\frac{1}{m^{\lambda+i\gamma}}$ for $m>0,$ and  $k_{k,\lambda+i\gamma}(s)=0$ if $s\neq m^k$ for every $m>0.$ Let us recall that $k_{k,\lambda+i\gamma}\in \ell^{p,\infty}$ if and only if
  \begin{align*}
  \Vert k_{k,\lambda+i\gamma} \Vert_{\ell^{p,\infty}}:=\sup_{\alpha>0}\alpha\mu\{s\in\mathbb{Z}:|k_{k,\lambda+i\gamma}(s)|>\alpha\}^{\frac{1}{p}}<\infty.
  \end{align*}
  Taking into account that
   \begin{align*}
  \Vert k_{k,\lambda+i\gamma} \Vert_{\ell^{p,\infty}}&:=\sup_{\alpha>0}\alpha\mu\{s\in\mathbb{Z}:|k_{k,\lambda+i\gamma}(s)|>\alpha\}^{\frac{1}{p}}\\
  &=\sup_{\alpha>0}\alpha\{m^{k}\in \mathbb{Z}: m>0\,\,\textnormal{and}\,\,\frac{1}{m^\lambda}>\alpha \}^{\frac{1}{p}}\\
  &= \sup_{\alpha>0}\alpha\mu\{m^{k}\in \mathbb{Z}: m>0\,\,\textnormal{and}\,\,m <\frac{1}{\alpha^{\frac{1}{\lambda}}} \}^{\frac{1}{p}}\\
  \end{align*}  and by considering that
  $ \mu\{m^{k}\in \mathbb{Z}: m>0\,\,\textnormal{and}\,\,m <\frac{1}{\alpha^{\frac{1}{\lambda}}} \}=0, $ for  $\alpha>1,$
we deduce the following fact
  \begin{align*}
    \sup_{\alpha>0}\alpha\mu\{s\in\mathbb{Z}:|k_{k,\lambda+i\gamma}(s)|>\alpha\}^{\frac{1}{p}}
    &= \sup_{0<\alpha\leq 1} \mu\{m^{k}\in \mathbb{Z}: m>0\,\,\textnormal{and}\,\,m <\frac{1}{\alpha^{\frac{1}{\lambda}}} \} \\ &=\sup_{0<\alpha\leq 1}\alpha[\frac{1}{\alpha^{\frac{1}{\lambda}}}]^{\frac{1}{p}}
  \end{align*}
  where $[\cdot]$ denotes the integer part function.
  Now, because
  $$\sup_{0<\alpha\leq 1}\alpha\left[\frac{1}{\alpha^{\frac{1}{\lambda}}}\right]^{\frac{1}{p}}=\sup_{0<\alpha\leq 1}\alpha^{1-\frac{1}{\lambda p}}\leq C<\infty,$$
if and only if $\lambda p\geq 1$ we finish the proof of the first assertion. Now, for the second part, let us note that $k_{k,\lambda+i\gamma}\in \ell^p$ if and only if $\lambda p>1.$ So, by Theorem \eqref{t1dc} we end the proof.
\end{proof}

We end this section with the following discussion in relation with  the Hypothesis $K^*.$
\begin{remark}\label{Rem}
It was proved in Stein and Wainger\cite{st3}, that the Hypothesis $K^*$ conjectured by Hooley, is equivalent to the following fact:
\begin{equation}\label{K*}
  \textnormal{for all\,} \lambda: \frac{1}{2}<\lambda<1,\textnormal{for all\,} k\in\mathbb{N}:\,\, m_{k,\lambda}\in L^{2k}[0,1].
\end{equation} By using Theorem \ref{T2} we can show the following (weak) assertion:
\begin{equation}\label{K}
  \textnormal{for all\,} \lambda:\frac{1}{2}<\lambda<1,\textnormal{for all\,} k\in\mathbb{N}:\,\,k<\frac{1/2}{1-\lambda}, \,\, m_{k,\lambda}\in L^{2k}[0,1].
\end{equation} In fact, by Theorem \ref{T2}, for every $\frac{1}{2}<\lambda<1$ the operator $I_{k,\lambda}$ is  weak$(1,r')$  where $r':=\frac{1}{\lambda}.$ By Theorem \ref{T2} the convolution kernel of $I_{k,\lambda},$ $k_{k,\lambda}=\mathscr{F}^{-1}m_{k,\lambda}\in \ell^{r',\infty}.$  Because $1<r'<2,$ by the Hardy-Littlewood-Stein inequality, we obtain:
\begin{equation}\label{LPE}
  \Vert m_{k,\lambda}\Vert_{L^{r,\infty}[0,1]}\leq C\Vert k_{k,\lambda}\Vert_{r',\infty}.
\end{equation} So, $m_{k,\lambda}\in L^{r,\infty}[0,1]$ and by the embedding $L^{r,\infty}[0,1]\subset L^{s}[0,1]$ for $s<r=\frac{1}{1-\lambda}$ we deduce that $m_{k,\lambda}\in L^{2k}[0,1]$ for $k< \frac{1/2}{1-\lambda}.$
\end{remark}

\section{Pseudo-differential operators on $\ell^2(\mathbb{Z}^n).$ Calder\'on-Vaillancourt Theorem and Gohberg Lemma }\label{sectionpseudo}

In this section we investigate the action of discrete pseudo-differential operators on $\ell^2(\mathbb{Z}^n).$ Our starting point is the following result where we show that the boundedness of pseudo-differential operators can be transferred to their discrete analogues.

\begin{theorem}\label{teorema principal1}
Let $1< p<\infty.$ If  $a:\mathbb{T}^n\times\mathbb{R}^n\rightarrow \mathbb{C}$ is a continuous bounded function and the pseudo-differential operator
\begin{equation}
Tf(x)=\int_{\mathbb{R}^n}e^{i2\pi(x,\xi)}a(x,\xi)(\mathscr{F}_{\mathbb{R}^n}f)(\xi)d\xi
\end{equation} extends to a bounded operator on $L^2(\mathbb{R}^n),$ then the discrete pseudo-differential operator
\begin{equation}\label{cp}
t_mf(n'):=\int_{\mathbb{T}^n}e^{i2\pi(x,\xi)}a(x,\xi)(\mathscr{F}f)(\xi)d\xi,\,\,m(n',\xi)=\overline{a(\xi,-n')},
\end{equation} also extends to a bounded operator on $\ell^2(\mathbb{Z}^n).$ Moreover, some constant $C_p$ satisfies $\Vert t_m \Vert_{\mathscr{B}(\ell^p(\mathbb{Z}^n))}\leq C_p\Vert T\Vert_{\mathscr{B}(L^p(\mathbb{R}^n))}.$
\end{theorem}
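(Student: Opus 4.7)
The plan is a duality-plus-regularization argument that converts the $L^2(\mathbb{R}^n)$-boundedness of $T$ into the desired $\ell^2$-bound for $t_m$, exploiting the $\mathbb{Z}^n$-periodicity of the symbol $a(\cdot,\xi)$.

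\textbf{Duality reduction.} From $\|t_m f\|_{\ell^2}=\sup_{\|c\|_{\ell^2}\le 1}|\langle t_m f,c\rangle_{\ell^2}|$, unfolding the definition of $t_m$ and applying Parseval on $\mathbb{T}^n$ gives $\langle t_m f,c\rangle_{\ell^2}=\int_{\mathbb{T}^n}\hat f(\xi)\overline{\Psi(\xi)}\,d\xi$, where $\Psi(\xi):=\sum_{n'}c(n')e^{-i2\pi n'\cdot\xi}a(\xi,-n')$. By Cauchy--Schwarz and $\|\hat f\|_{L^2(\mathbb{T}^n)}=\|f\|_{\ell^2(\mathbb{Z}^n)}$, the theorem reduces to $\|\Psi\|_{L^2(\mathbb{T}^n)}\le\|T\|_{L^2(\mathbb{R}^n)}\|c\|_{\ell^2}$. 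After the change $n'\mapsto-n'$ and with $\tilde c(n'):=c(-n')$ this becomes $\Psi(\xi)=\sum\tilde c(n')e^{i2\pi n'\cdot\xi}a(\xi,n')$, which is the formal expression $T\tilde h(\xi)$ for the periodic ``function'' $\tilde h(x):=\sum\tilde c(n')e^{i2\pi n'\cdot x}$ whose Fourier transform is the Dirac comb $\hat{\tilde h}=\sum\tilde c(n')\delta_{n'}$. Thus the real task is to bound $T$ applied to a periodic distribution.

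\textbf{Regularization.} By density, take $c$ finitely supported. Choose $\psi\in C_c^\infty(\mathbb{R}^n)$ with $\int\psi=1$ and $\mathrm{supp}\,\psi\subset[-\tfrac12,\tfrac12]^n$, set $\psi_\varepsilon(\eta):=\varepsilon^{-n}\psi(\eta/\varepsilon)$, and define $h_\varepsilon$ by $\hat h_\varepsilon(\eta):=\sum\tilde c(n')\psi_\varepsilon(\eta-n')$. Disjointness of the bumps for small $\varepsilon$ gives $\|h_\varepsilon\|_{L^2(\mathbb{R}^n)}^2=\|\tilde c\|_{\ell^2}^2\varepsilon^{-n}\|\psi\|_{L^2}^2$, and on the physical side $h_\varepsilon(x)=\Phi_\varepsilon(x)\tilde h(x)$ with slowly varying envelope $\Phi_\varepsilon(x):=\hat\psi(-\varepsilon x)$. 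Expanding
\[
Th_\varepsilon(\xi)=\sum_{n'}\tilde c(n')e^{i2\pi n'\cdot\xi}\int e^{i2\pi\varepsilon\xi\cdot s}a(\xi,n'+\varepsilon s)\psi(s)\,ds,
\]
and combining the uniform continuity of $a$ on $\mathbb{T}^n\times K$ for a compact $K$ containing the relevant supports (which transfers to $\mathbb{R}^n\times K$ by $\mathbb{T}^n$-periodicity of $a$ in its first slot) with $\int\psi=1$, I obtain the uniform pointwise bound
\[
|Th_\varepsilon(\xi)-\Phi_\varepsilon(\xi)\Psi(\xi)|\le\omega_a(\varepsilon)\|\psi\|_{L^1}\|\tilde c\|_{\ell^1},\qquad\omega_a(\varepsilon)\xrightarrow{\varepsilon\to 0}0.
\]

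\textbf{Averaging and conclusion.} Localize to the expanding box $\{|\xi|\le R/\varepsilon\}$. The pointwise estimate contributes at most $O(R^{n/2}\varepsilon^{-n/2}\omega_a(\varepsilon))=o(\varepsilon^{-n/2})$ in $L^2$-norm. A change of variables $y=\varepsilon\xi$ together with the weak convergence of the $\varepsilon$-dilated periodic function $|\Psi(y/\varepsilon)|^2$ to its mean $\|\Psi\|_{L^2(\mathbb{T}^n)}^2$ yields
\[
\varepsilon^n\|\Phi_\varepsilon\Psi\|_{L^2(|\xi|\le R/\varepsilon)}^2\xrightarrow{\varepsilon\to 0}\|\Psi\|_{L^2(\mathbb{T}^n)}^2\!\!\int_{|y|\le R}\!|\hat\psi(-y)|^2\,dy\xrightarrow{R\to\infty}\|\psi\|_{L^2}^2\|\Psi\|_{L^2(\mathbb{T}^n)}^2.
\]
Combining with the bound $\|Th_\varepsilon\|_{L^2(\mathbb{R}^n)}^2\le\|T\|^2\|h_\varepsilon\|_{L^2}^2=\|T\|^2\|\tilde c\|_{\ell^2}^2\varepsilon^{-n}\|\psi\|_{L^2}^2$ via the reverse triangle inequality, dividing by $\varepsilon^{-n}\|\psi\|_{L^2}^2$, and passing $\varepsilon\to 0$ then $R\to\infty$ gives the desired $\|\Psi\|_{L^2(\mathbb{T}^n)}^2\le\|T\|^2\|c\|_{\ell^2}^2$, completing the proof.

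\textbf{Main obstacle.} The delicate point is the uniform pointwise approximation $Th_\varepsilon\approx\Phi_\varepsilon\Psi$ on a box of size $R/\varepsilon$ when only continuity (not differentiability) of $a$ is available: the $\mathbb{T}^n$-periodicity of $a$ in its first argument is essential, since it upgrades uniform continuity on the fundamental cell to uniform continuity on all of $\mathbb{R}^n\times K$, ensuring the error is $o(\varepsilon^{-n/2})$ even though the volume of the box grows like $\varepsilon^{-n}$.
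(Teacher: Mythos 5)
Your argument is correct, and it reaches the same intermediate target as the paper --- namely the bound $\Vert \sum_{n'} \tilde c(n')\, e^{i2\pi n'\cdot\xi}\, a(\xi,n')\Vert_{L^2(\mathbb{T}^n)} \le \Vert T\Vert\, \Vert c\Vert_{\ell^2}$, which is precisely the $L^2(\mathbb{T}^n)$-boundedness of the periodic operator $A$ on trigonometric polynomials --- but by a genuinely different route. The paper first reduces $t_m$ to $A$ via the identity $t_m=\mathscr{F}^{-1}A^{*}\mathscr{F}$ quoted from Botchway--Kibiti--Ruzhansky, and then bounds $A$ by computing the bilinear form $\lim_{\varepsilon\to 0}\varepsilon^{n/2}\int T(Pw_{\varepsilon\alpha})\overline{Q}\,w_{\varepsilon\beta}$ with Gaussian damping on the physical side, two test polynomials, and the iterated-limit averaging argument of Lemma \ref{lemma1}; you instead perform the duality reduction by hand via Parseval, mollify the Dirac comb on the frequency side so that $\Vert h_\varepsilon\Vert_{L^2}^2$ is computed exactly from disjointness of the bumps, and compare $\Vert Th_\varepsilon\Vert_{L^2}$ with $\Vert\Phi_\varepsilon\Psi\Vert_{L^2}$ quadratically, extracting the limit from the weak-$*$ convergence of the dilated periodic function $|\Psi(\cdot/\varepsilon)|^2$ to its mean. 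Your version buys self-containedness (no appeal to the external identity for $t_m$ or to the Stein--Weiss lemma) and the clean constant $C=1$, at the price of the more delicate book-keeping of the $R/\varepsilon$-box truncation and the order of limits $\varepsilon\to 0$ then $R\to\infty$ --- which you handle correctly, since the uniform pointwise error $\omega_a(\varepsilon)\Vert\psi\Vert_{L^1}\Vert\tilde c\Vert_{\ell^1}$ (valid for finitely supported $c$, by compactness of $\mathbb{T}^n\times K$ and periodicity in the first slot) contributes only $O(R^{n/2}\omega_a(\varepsilon))$ after the $\varepsilon^{n/2}$ renormalisation. Like the paper's proof, your argument is specific to $p=2$ (it uses Plancherel/Parseval throughout), so the trailing claim $\Vert t_m\Vert_{\mathscr{B}(\ell^p)}\le C_p\Vert T\Vert_{\mathscr{B}(L^p)}$ for general $p$ is not addressed by either proof.
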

We begin with the proof of this result by considering the following technical  lemma.
\begin{lemma}\label{lemma1} Suppose $f$ is a continuous periodic function on $\mathbb{R}^n$ and let $\{g_{m}\}$ be a sequence of uniformly bounded continuous periodic functions on $\mathbb{R}^n.$ If $g_{m}$ converges pointwise to a function $g$ defined on $\mathbb{R}^n$ and $\epsilon_m$ is a positive sequence of real numbers, then
\begin{equation}
\lim_{m\rightarrow\infty}\epsilon_m^{\frac{n}{2}}\int_{\mathbb{R}^n}e^{-\epsilon_m|x|^2}f(x)g_{m}(x)dx=\int_{\mathbb{T}^n}f(x)g(x)dx
\end{equation} provided that $\epsilon_{m}\rightarrow 0.$
\end{lemma}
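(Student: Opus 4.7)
The strategy is to exploit the common periodicity of $f$ and the $g_m$, reducing the integral over $\mathbb{R}^n$ to one over a single period cube $\mathbb{T}^n$ weighted by a Gaussian lattice sum that, once normalized by $\epsilon_m^{n/2}$, is a Riemann sum with spacing $\sqrt{\epsilon_m}$. First I would partition $\mathbb{R}^n$ as the disjoint union of the translates $k+[0,1]^n$, $k\in\mathbb{Z}^n$, and use periodicity to rewrite the left-hand side as
\begin{equation*}
\int_{\mathbb{T}^n} f(y)\,g_m(y)\,K_m(y)\,dy, \qquad K_m(y) := \epsilon_m^{n/2}\sum_{k\in\mathbb{Z}^n} e^{-\epsilon_m|y+k|^2}.
\end{equation*}

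The content of the lemma is then concentrated in the asymptotic behaviour of $K_m$. I would show that $K_m(y)$ is uniformly bounded in $m$ and $y\in\mathbb{T}^n$, and that it converges uniformly in $y$ to a positive constant as $\epsilon_m\to 0$. The cleanest route is Poisson summation: since the Fourier transform of $x\mapsto e^{-\epsilon_m|x|^2}$ is again a rescaled Gaussian, one obtains an exact identity in which only the zero-frequency term survives in the limit $\epsilon_m\to 0$, with the remaining terms decaying exponentially fast uniformly in $y$. Alternatively one can proceed directly: $K_m(y)$ is a Riemann sum for $\int_{\mathbb{R}^n} e^{-|u|^2}\,du$ on cubes of side $\sqrt{\epsilon_m}$ with sample points $\sqrt{\epsilon_m}(y+k)$, and a standard error estimate using the uniform Lipschitz bound on $u\mapsto e^{-|u|^2}$ yields the limit together with the required uniform bound.

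With these facts in hand the proof is completed by dominated convergence. The integrand $f(y)g_m(y)K_m(y)$ on $\mathbb{T}^n$ is dominated by $\|f\|_\infty\cdot M\cdot\sup_m\|K_m\|_\infty$, where $|g_m|\le M$ by hypothesis, which is an integrable majorant on the compact set $\mathbb{T}^n$. Since $g_m(y)\to g(y)$ pointwise and $K_m$ converges uniformly to its constant limit, the dominated convergence theorem yields the stated identity. The main obstacle is the middle step, namely the uniform control and identification of the limit of the Gaussian lattice sum; the periodization and the application of dominated convergence are essentially formal once Step 2 is in place.
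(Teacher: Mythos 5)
Your proof is correct and takes a genuinely different route from the paper's. The paper fixes $m$, invokes Lemma 3.9 of Stein--Weiss to compute $\lim_{s\to\infty}\epsilon_s^{n/2}\int_{\mathbb{R}^n}e^{-\epsilon_s|x|^2}f g_m\,dx$ for each $m$, separately computes the limit in $m$ for fixed $s$ by dominated convergence, and then asserts that the double limit exists and coincides with the iterated limit; that interchange is the delicate point (a Moore--Osgood argument would require uniformity in one of the two indices, which the paper does not verify), and it is exactly what is needed to pass to the diagonal sequence $s=m$ appearing in the statement. Your periodization sidesteps this entirely: after folding the integral onto $\mathbb{T}^n$ against the Gaussian lattice sum $K_m(y)=\epsilon_m^{n/2}\sum_{k\in\mathbb{Z}^n}e^{-\epsilon_m|y+k|^2}$ there is only one limit to take, and the uniform convergence and uniform boundedness of $K_m$ (cleanest via Poisson summation, where all nonzero frequencies decay like $e^{-c/\epsilon_m}$ uniformly in $y$) combined with dominated convergence on the finite measure space $\mathbb{T}^n$ finish the proof. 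In effect you re-prove the Stein--Weiss lemma in a form that carries the $m$-dependence along, which makes the argument self-contained and rigorous where the paper's is only sketched. One caveat you should address explicitly: with the Gaussian $e^{-\epsilon_m|x|^2}$ as written, your $K_m$ converges to $\int_{\mathbb{R}^n}e^{-|u|^2}\,du=\pi^{n/2}$, so your method yields $\pi^{n/2}\int_{\mathbb{T}^n}fg\,dx$ rather than $\int_{\mathbb{T}^n}fg\,dx$; the constant is $1$ only under the Stein--Weiss normalization $e^{-\pi\epsilon_m|x|^2}$, which is what the paper actually uses when it applies the lemma. This is a normalization slip in the lemma's statement rather than a gap in your argument, but you should state the limiting constant explicitly instead of claiming the identity exactly as written.
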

\begin{proof}
By using Lemma 3.9 of \cite{SteWei} we have for every $m\in\mathbb{N}$
$$I_{1,m}:=\lim_{s\rightarrow\infty}  \epsilon_s^{\frac{n}{2}}\int_{\mathbb{R}^n}e^{-\epsilon_s|x|^2}f(x)g_{m}(x)dx=\int_{\mathbb{T}^n}f(x)g_m(x)dx.                  $$ Now, taking into account that the sequence $\{g_m\}$ is uniformly bounded, an application of the dominated convergence  theorem gives
$$I_{2,m}:=\lim_{m\rightarrow\infty}  \epsilon_s^{\frac{n}{2}}\int_{\mathbb{R}^n}e^{-\epsilon_s|x|^2}f(x)g_{m}(x)dx=\epsilon_s^{\frac{n}{2}}\int_{\mathbb{T}^n}e^{-\epsilon_s|x|^2}f(x)g(x)dx.                  $$
So, the limit $\lim_{m,s\rightarrow\infty}  \epsilon_s^{\frac{n}{2}}\int_{\mathbb{R}^n}e^{-\epsilon_s|x|^2}f(x)g_{m}(x)dx                 $ there exists and can be computed from iterated limits in the following way
$$ \lim_{m,s\rightarrow\infty}  \epsilon_s^{\frac{n}{2}}\int_{\mathbb{R}^n}e^{-\epsilon_s|x|^2}f(x)g_{m}(x)dx =\lim_{m\rightarrow\infty} I_{1,m}=\int_{\mathbb{T}^n}f(x)g(x)dx,$$
where in the last line we have use the dominated convergence theorem.
Consequently we obtain
$$\lim_{m\rightarrow\infty}  \epsilon_m^{\frac{n}{2}}\int_{\mathbb{R}^n}e^{-\varepsilon_m|x|^2}f(x)g_{m}(x)dx=\int_{\mathbb{T}^n}f(x)g(x)dx.                  $$ So, we finish the proof.
\end{proof}

\begin{proof} Our proof consists of several steps. First, we will show that the $L^2(\mathbb{R}^n)$-boundedness of $T$ implies the $L^2(\mathbb{T}^n)$-boundedness of the periodic pseudo-differential operator
\begin{equation}\label{Periodic}
  Af(x)=\sum_{\xi\in\mathbb{T}^n}e^{i2\pi x\cdot \xi}a(x,\xi)(\mathscr{F}_{\mathbb{T}^n}f)(\xi),\,\,f\in \mathscr{D}(\mathbb{T}^n).
\end{equation} where $\mathscr{F}_{\mathbb{T}^n}f(\xi)=\int_{\mathbb{T}^n}e^{i2\pi(x,\xi)}{f(x)}dx,$ is the periodic Fourier transform of $f.$  Later we will prove that the $L^2(\mathbb{T}^n)$-boundedness of $A$ implies the $\ell^2(\mathbb{Z}^n)$-boundedness of $t_m.$\\

Now,  let us assume that $P$ and $Q$ are trigonometric polynomials. For every $\delta>0$ let us denote by $w_\delta(x)=e^{-\delta|x|^2}.$ So, if $\varepsilon,\alpha,\beta>0$ and $\alpha+\beta=1$ let us note that
\begin{equation}\label{eq1}
\lim_{\varepsilon \rightarrow 0}\varepsilon^{\frac{n}{2}}\int_{\mathbb{R}^n}T(Pw_{\varepsilon\alpha})\overline{Q(x)}w_{\varepsilon \beta}dx=(\pi/\beta)^{n/2}\int_{\mathbb{T}^n}(A P)(x)\overline{Q(x)}dx.
\end{equation}
By linearity we only need to prove \eqref{eq1} when $P(x)=e^{i2\pi m x}$ and $Q(x)=e^{i2\pi k x}$ for $k$ and $m$ in $\mathbb{Z}^n.$ The right hand side of \eqref{eq1} can be computed as follows,
\begin{align*}
\int_{\mathbb{T}^n}(A P)(x)\overline{Q(x)}dx &=\int_{\mathbb{T}^n}\left(\sum_\xi e^{i2\pi(x,\xi)}{a(x,\xi)\delta_{m,\xi}} \right)\overline{Q(x)}dx\\
&=\int_{\mathbb{T}^n} e^{i2\pi(x,m)}a(x,m)\overline{Q(x)}dx=\int_{\mathbb{T}^n} e^{i2\pi(x,m)-i2\pi kx }a(x,m)dx.
\end{align*}
Now, we compute the left hand side of \eqref{eq1}. Taking under consideration that the euclidean Fourier transform of $P(x)w_{\alpha\varepsilon}$ is given by
\begin{equation}
\mathscr{F}_{\mathbb{R}^n}(Pw_{\alpha\varepsilon})(\xi)=(\alpha\varepsilon)^{-\frac{n}{2}}e^{-|\xi-m|^2/\alpha\varepsilon},
\end{equation}
by the Fubini theorem we have
\begin{align*}
\int_{\mathbb{R}^n}    T(Pw_{\varepsilon\alpha})   & \overline{Q(x)}  w_{\varepsilon \beta}(x)dx =\int_{\mathbb{R}^n}\int_{\mathbb{R}^n}e^{i2\pi( x,\xi)}a(x,\xi) (\alpha\varepsilon)^{-\frac{n}{2}}e^{-|\xi-m|^2/\alpha\varepsilon} \overline{Q(x)}w_{\varepsilon \beta}(x)d\xi dx\\
&=\int_{\mathbb{R}^n}\int_{\mathbb{R}^n}e^{i2\pi( x,\xi)-i2\pi kx} e^{-\pi\varepsilon\beta|x|^2} a(x,\xi) dx (\alpha\varepsilon)^{-\frac{n}{2}}e^{-|\xi-m|^2/\alpha\varepsilon} d\xi\\
&=\int_{\mathbb{R}^n}\int_{\mathbb{R}^n}e^{i2\pi( x,(\alpha\varepsilon)^{\frac{1}{2}}\eta+m)-i2\pi kx} a( x,(\alpha\varepsilon)^{\frac{1}{2}} \eta+m) e^{-\pi\varepsilon\beta|x|^2}dx  e^{-|\eta|^2} d\eta.\\
\end{align*}
So, we have
\begin{align*}
&\lim_{\varepsilon\rightarrow 0}\varepsilon^{n/2} \int_{\mathbb{R}^n}    T(Pw_{\varepsilon\alpha})   \overline{Q(x)}  w_{\varepsilon \beta}(x)dx =\lim_{\varepsilon\rightarrow 0}\beta^{-\frac{n}{2}}(\beta\varepsilon)^{n/2}\int_{\mathbb{R}^n}    T(Pw_{\varepsilon\alpha})    \overline{Q(x)}  w_{\varepsilon \beta}(x)dx\\
&=\lim_{\varepsilon\rightarrow 0}\beta^{-\frac{n}{2}}(\beta\varepsilon)^{n/2} \int_{\mathbb{R}^n}\int_{\mathbb{R}^n}e^{i2\pi( x,(\alpha\varepsilon)^{\frac{1}{2}}\eta+m)-i2\pi kx}a( x,(\alpha\varepsilon)^{\frac{1}{2}} \eta+m) e^{-\pi\varepsilon\beta|x|^2}dx \,\cdot\, e^{-|\eta|^2} d\eta.
\end{align*}
By Lemma \eqref{lemma1}, we have
\begin{align*}
\lim_{\varepsilon\rightarrow 0} (\beta\varepsilon)^{n/2} &\int_{\mathbb{R}^n}e^{i2\pi( x,(\alpha/\beta)^{\frac{1}{2}}(\beta\varepsilon)^{\frac{1}{2}}\eta+m)}e^{-i2\pi kx}a( x,(\alpha\varepsilon)^{\frac{1}{2}} \eta+m) e^{-\pi\varepsilon\beta|x|^2}dx\\
&=\int_{\mathbb{T}^n}e^{i2\pi( x,m)-i2\pi kx}a(x,m)dx.
\end{align*}
Taking into account that $\int_{\mathbb{R}^n}e^{-|\eta|^2}d\eta=\pi^{n/2},$ and that $a$ is a  continuous bounded function, by the dominated convergence theorem we have
\begin{align*}
\lim_{\varepsilon\rightarrow 0}\varepsilon^{n/2} \int_{\mathbb{R}^n}    T(Pw_{\varepsilon\alpha})   \overline{Q(x)}  w_{\varepsilon \beta}(x)dx =(\pi/\beta)^{n/2}\int_{\mathbb{T}^n}e^{i2\pi( x,m)-i2\pi kx}a(x,m)dx.
\end{align*}
If we assume that $T$ is a bounded linear operator on $L^2(\mathbb{R}^n),$ then the restriction of $A$ to trigonometric polynomials is a bounded operators on $L^2(\mathbb{T}^n).$ In fact, if $\alpha=\frac{1}{2}$ and $\beta=\frac{1}{2}$ we obtain
\begin{align*}
&\Vert AP\Vert_{L^2(\mathbb{T}^n)} =\sup_{\Vert Q \Vert_{L^{2}(\mathbb{T}^n)}=1}\left|\int_{\mathbb{T}^n}(A P)(x)\overline{Q(x)}dx \right|\\
&=\sup_{\Vert Q \Vert_{L^{2}(\mathbb{T}^n)}=1}\lim_{\varepsilon\rightarrow 0} \varepsilon^{n/2} (\frac{1}{\pi 2})^{n/2}\left| \int_{\mathbb{R}^n}    T(Pw_{\varepsilon\alpha})   \overline{Q(x)}  w_{\varepsilon \beta}(x)dx  \right|\\
&\leq\sup_{\Vert Q \Vert_{L^{2}(\mathbb{T}^n)}=1}\lim_{\varepsilon\rightarrow 0} \varepsilon^{n/2}  (\frac{1}{\pi 2})^{n/2}\Vert T \Vert_{\mathscr{B}(L^2)}\Vert Pw_{\varepsilon/2}\Vert_{L^2(\mathbb{R}^n)}\Vert Qw_{\varepsilon/2}\Vert_{L^{2}(\mathbb{T}^n)}\\
&\leq\sup_{\Vert Q \Vert_{L^{2}(\mathbb{T}^n)}=1} \Vert T \Vert_{\mathscr{B}(L^2)}\lim_{\varepsilon\rightarrow 0}  (\frac{1}{\pi 2})^{n/2} \left(    \varepsilon^{n/2}\int_{\mathbb{R}^n}|P(x)|^2e^{-\pi\varepsilon|x|^2}dx\right)^{\frac{1}{2}} \\
&\hspace{8cm} \times\left(  \varepsilon^{n/2}  \int_{\mathbb{R}^n}|Q(x)|^{2}e^{-\pi\varepsilon|x|^2}dx   \right)^{\frac{1}{2}}\\
&\leq\sup_{\Vert Q \Vert_{L^{2}(\mathbb{T}^n)}=1} \Vert T \Vert_{\mathscr{B}(L^2)} (\frac{1}{\pi 2})^{n/2} \left(  \int_{\mathbb{T}^n}|P(x)|^2dx\right)^{\frac{1}{2}}  \left(   \int_{\mathbb{T}^n}|Q(x)|^{2}dx   \right)^{\frac{1}{2}}\\
&= \Vert T \Vert_{\mathscr{B}(L^2)} (\frac{1}{\pi 2})^{n/2} \Vert P\Vert_{L^2(\mathbb{T}^n)}.
\end{align*}
Because the restriction of $A$ to trigonometric polynomials is a bounded operator on $L^2(\mathbb{T}^n)$ this restriction admits a unique bounded extension on $L^2(\mathbb{T}^n).$ Now, by equation (4.4) of Theorem 4.1 of \cite{Ruzhansky} we have the identity
\begin{equation}\label{Ruzidentity}
  t_m=\mathscr{F}^{-1}A^{*}\mathscr{F},\,\,\,m(n',\xi)=\overline{a(\xi,-n')}.
\end{equation}
Because, $\mathscr{F}:\ell^2(\mathbb{Z}^n)\rightarrow L^2(\mathbb{T}^n)$ extends to an isomorphism of Hilbert spaces, $t_m$ extends to a bounded operator on $\ell^2$ if and only if $A^*$ (and hence $A$) is $L^2$-bounded. So, we finish the proof.
\end{proof}

Now, by using the previous result and the following theorem we establish our discrete version of the Calder\'on-Vaillancourt theorem (see \cite{ca1,ca2}).

\begin{theorem}[Calder\'on-Vaillancourt]\label{CalVaillTheo}
    Let us assume that $T$ is a pseudo-differential operator on $\mathbb{R}^n$ associated with the symbol $a.$ Then, under the condition \begin{equation}\label{calderon}
  |\partial_x^\beta\partial_\xi^\alpha a(x,\xi)|\leq C_{\alpha,\beta}(1+|\xi|)^{(|\beta|-|\alpha|)\rho},\,\,0\leq \rho<1\,\,
\end{equation} the operator $ T $ extends to a bounded operator on $L^2(\mathbb{R}^n).$
  \end{theorem}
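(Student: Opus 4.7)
The plan is to prove the $L^2$-boundedness via the Cotlar--Stein almost-orthogonality lemma applied to a phase-space decomposition of $T$ at a scale adapted to the symbol estimate. First I would treat the base case $\rho=0$, in which the hypothesis simplifies to $|\partial_x^\beta\partial_\xi^\alpha a(x,\xi)|\leq C_{\alpha,\beta}$ uniformly in $(x,\xi)$. The case $0<\rho<1$ is then reduced to the base case via a Littlewood--Paley decomposition $a=\sum_{k\geq 0}a_k$ with $a_k$ localised to $|\xi|\sim 2^k$: the anisotropic dilation $(x,\xi)\mapsto (2^{-k\rho}x,2^{k\rho}\xi)$ turns each $a_k$ into a $\rho=0$ symbol with bounds uniform in $k$, and an orthogonality argument in the Littlewood--Paley pieces combines the uniform $L^2$-bounds for the corresponding operators into a bound for $T$.

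For the base case $\rho=0$, fix a smooth partition of unity $\{\phi_j(\xi)=\phi(\xi-j)\}_{j\in\mathbb{Z}^n}$ subordinate to the unit cubes covering $\mathbb{R}^n$, set $a_j(x,\xi)=a(x,\xi)\phi_j(\xi)$, and denote by $T_j$ the pseudo-differential operator with symbol $a_j$, so that formally $T=\sum_j T_j$. By Cotlar--Stein it suffices to verify that
\begin{equation*}
\sup_k\sum_j\|T_jT_k^{*}\|_{\mathscr{B}(L^2)}^{1/2} + \sup_k\sum_j\|T_j^{*}T_k\|_{\mathscr{B}(L^2)}^{1/2} < \infty.
\end{equation*}

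A direct kernel computation reveals that the kernel of $T_jT_k^{*}$ equals $\int e^{2\pi i(x-y)\xi}a_j(x,\xi)\overline{a_k(y,\xi)}\,d\xi$ and therefore contains the factor $\phi_j(\xi)\phi_k(\xi)$, so $T_jT_k^{*}=0$ whenever $|j-k|\geq 2$, while the finitely many surviving terms are uniformly bounded by Schur's test after integrating by parts in $\xi$ to gain decay in $|x-y|$. The kernel of $T_j^{*}T_k$, by contrast, carries the oscillatory factor $e^{2\pi iz(\eta-\xi)}$ multiplying $\overline{a_j(z,\xi)}a_k(z,\eta)$ with $|\xi-\eta|$ of order $|j-k|$; an $N$-fold integration by parts in $z$, legitimate because $\partial_z^\beta a$ is bounded when $\rho=0$, produces the estimate $\|T_j^{*}T_k\|_{\mathscr{B}(L^2)}\leq C_N(1+|j-k|)^{-N}$, and taking $N>2n$ makes both Cotlar--Stein sums finite.

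The main obstacle I expect is the anisotropic decomposition when $\rho>0$ alluded to in the first step: the unit cubes must be replaced by boxes of side $(1+|\xi|)^{-\rho}$ in $\xi$ and $(1+|\xi|)^{\rho}$ in $x$, and both integrations by parts must still yield enough decay to dominate the growth in the number of boxes. The bookkeeping goes through precisely when $\rho<1$; at the endpoint $\rho=1$ one enters a symbol class containing operators that fail to be $L^2$-bounded, which confirms the sharpness of the stated range.
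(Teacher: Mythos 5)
The paper does not actually prove this statement: Theorem \ref{CalVaillTheo} is the classical Calder\'on--Vaillancourt theorem, quoted as a known result with a citation to \cite{ca1,ca2} and then used as a black box to transfer $L^2(\mathbb{R}^n)$-boundedness to $\ell^2(\mathbb{Z}^n)$ via Theorem \ref{teorema principal1} and Lemma \ref{IClases}. So there is no in-paper argument to compare against; what you have written is the standard Cotlar--Stein proof of the classical theorem, and as a strategy it is correct. Your base case $\rho=0$ with a unit-cube partition in $\xi$ only, the vanishing of $T_jT_k^{*}$ for separated indices because its kernel $\int e^{2\pi i(x-y)\xi}a_j(x,\xi)\overline{a_k(y,\xi)}\,d\xi$ carries the product $\phi_j\phi_k$, the decay $\|T_j^{*}T_k\|\leq C_N(1+|j-k|)^{-N}$ from integration by parts in the common spatial variable, and the rescaling $(x,\xi)\mapsto(2^{-k\rho}x,2^{k\rho}\xi)$ reducing a dyadic piece of an $S^0_{\rho,\rho}$ symbol to a uniform $S^0_{0,0}$ symbol are all the textbook route, and the failure at $\rho=1$ is correctly identified (the paper itself notes this, citing Duoandikoetxea).

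Two places in your sketch would need tightening in a full write-up. First, in the $T_j^{*}T_k$ estimate the $z$-integral $\int e^{2\pi iz(\eta-\xi)}\overline{a_j(z,\xi)}a_k(z,\eta)\,dz$ is not absolutely convergent, so before Schur's test you must also integrate by parts in $\xi$ and $\eta$ (using the compact $\xi$-supports and the bounded $\xi$-derivatives) to produce decay in $|z-x|$ and $|z-y|$; only the combination of both integrations by parts yields an honest kernel bound of the form $C_N(1+|j-k|)^{-N}(1+|x-y|)^{-n-1}$. Second, the step ``an orthogonality argument in the Littlewood--Paley pieces combines the uniform $L^2$-bounds'' is precisely where the hypothesis $\rho<1$ is consumed and should not be left implicit: for the dyadic pieces $T_{a_j}^{*}T_{a_k}$ the frequency separation is of order $2^{\max(j,k)}$ while each integration by parts in $x$ costs a factor $2^{\max(j,k)\rho}$ from the symbol estimate, so one gains $2^{-\max(j,k)(1-\rho)}$ per step, which is summable exactly when $\rho<1$. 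Neither point is a wrong idea, only a gap between the sketch and a complete proof.
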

 The Calder\'on-Vaillancourt Theorem is sharp. In fact, this theorem fails when $\rho=1$ (see Duoandikoetxea \cite{Duo}, pag. 113). Because pseudo-differential operators are associated to symbol with a discrete first argument we need the notion of discrete derivatives. So,  we define the  partial difference operators $\Delta_{\xi_j}$ by:
$
\Delta_{\xi_j}\sigma(\xi)= \sigma(\xi+\delta_j)-\sigma(\xi),
$ and $
\Delta_{\xi}^{\alpha}=\Delta_{\xi_1}^{\alpha_1} \ldots \Delta_{\xi_n}^{\alpha_n}.
$
 For our further analysis we use the following result (see Corollary 4.5.7 of \cite{Ruz}):

\begin{lemma}\label{IClases}\ Let $0\leq \delta \leq 1,$ $0\leq \rho<1.$ Let $a:\mathbb{T}^n\times \mathbb{R}^n\rightarrow \mathbb{C}$ satisfying
\begin{equation}
\lvert  \partial_\xi ^\alpha \partial_x^\beta a (x, \xi)\rvert \leq C^{(1)}_{a\alpha \beta m}\langle \xi \rangle^{m- \rho \lvert \alpha \rvert + \delta \lvert \beta \rvert} ,
\end{equation}
for $|\alpha|\leq N_{1}$ and $|\beta|\leq N_{2}.$ Then the restriction $\tilde{a}=a|_{\mathbb{T}^n\times \mathbb{Z}^n}$ satisfies the estimate
\begin{equation}
\lvert  \Delta_\xi ^\alpha \partial_x^\beta \tilde{a} (x, \xi)\rvert \leq C_{a\alpha \beta m}C^{(1)}_{a\alpha \beta m}\langle \xi \rangle^{m- \rho \lvert \alpha \rvert + \delta \lvert \beta \rvert} ,
\end{equation}
for $|\alpha|\leq N_{1}$ and $|\beta|\leq N_{2}.$  The converse holds true, i.e, if a symbol $\tilde{a}(x,\xi)$ on $\mathbb{T}^n\times \mathbb{Z}^n$ satisfies  $(\rho,\delta)$-inequalities of the form
\begin{equation}
\lvert  \Delta_\xi ^\alpha \partial_x^\beta \tilde{a} (x, \xi)\rvert \leq C^{(2)}_{a\alpha \beta m}\langle \xi \rangle^{m- \rho \lvert \alpha \rvert + \delta \lvert \beta \rvert} ,
\end{equation}
then $\tilde{a}(x,\xi)$ is  the restriction of a symbol $a(x,\xi)$ on $\mathbb{T}^n\times \mathbb{R}^n$ satisfying estimates of the type
\begin{equation}
\lvert  \partial_\xi ^\alpha \partial_x^\beta a (x, \xi)\rvert \leq C_{a\alpha \beta m}C^{(2)}_{a\alpha \beta m}\langle \xi \rangle^{m- \rho \lvert \alpha \rvert + \delta \lvert \beta \rvert} .
\end{equation}
\end{lemma}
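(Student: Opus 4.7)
The plan is to prove the two implications separately. The forward direction, turning pointwise derivative bounds on $a$ into bounds on the finite differences of its restriction $\tilde a$, follows from the fundamental theorem of calculus. The converse requires constructing a smooth extension $a$ of $\tilde a$ from $\mathbb{T}^n\times\mathbb{Z}^n$ to $\mathbb{T}^n\times\mathbb{R}^n$ whose derivatives inherit the growth estimates from the discrete differences of $\tilde a$.

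For the first implication, I would iterate the one-variable identity $\Delta_{\xi_j}f(\xi)=\int_0^1\partial_{\xi_j}f(\xi+t\delta_j)\,dt$ componentwise to obtain, for $\xi\in\mathbb{Z}^n$,
\[
\Delta_\xi^\alpha\partial_x^\beta\tilde a(x,\xi)=\int_{[0,1]^{|\alpha|}}\partial_\xi^\alpha\partial_x^\beta a\bigl(x,\xi+\Theta(s)\bigr)\,ds,
\]
where $\Theta(s)\in\mathbb{R}^n$ has its $j$-th component bounded by $\alpha_j$. Since $|\alpha|\le N_{1}$ is fixed, $\langle\xi+\Theta(s)\rangle$ is comparable to $\langle\xi\rangle$ with constants depending only on $N_1$ (the sign of the exponent $m-\rho|\alpha|+\delta|\beta|$ is absorbed by a two-sided Peetre-type inequality). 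Inserting the hypothesized bound on $\partial_\xi^\alpha\partial_x^\beta a$ and integrating over the unit cube yields the claimed estimate on $\Delta_\xi^\alpha\partial_x^\beta\tilde a$ with a constant of the form $C_{a\alpha\beta m}C^{(1)}_{a\alpha\beta m}$.

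For the converse I would build an extension explicitly. Fix a Schwartz function $\chi\in\mathscr{S}(\mathbb{R}^n)$ with $\chi(0)=1$ and $\chi(\ell)=0$ for all $\ell\in\mathbb{Z}^n\setminus\{0\}$ (for instance a tensor product of sinc-like bumps, or the spline interpolation kernel used in Ruzhansky--Turunen), and set
\[
a(x,\xi):=\sum_{k\in\mathbb{Z}^n}\tilde a(x,k)\,\chi(\xi-k),
\]
so that $a|_{\mathbb{T}^n\times\mathbb{Z}^n}=\tilde a$. Termwise differentiation gives $\partial_\xi^\alpha\partial_x^\beta a(x,\xi)=\sum_{k}\partial_x^\beta\tilde a(x,k)\,\partial^\alpha\chi(\xi-k)$, and a discrete summation by parts (Abel rearrangement) rewrites this as
\[
\partial_\xi^\alpha\partial_x^\beta a(x,\xi)=\sum_{k\in\mathbb{Z}^n}\Delta_k^\alpha\partial_x^\beta\tilde a(x,k)\,\psi_\alpha(\xi-k),
\]
where $\psi_\alpha$ is a Schwartz weight obtained from $|\alpha|$ shifts of $\chi$. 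Inserting the hypothesized bound $|\Delta_k^\alpha\partial_x^\beta\tilde a(x,k)|\le C^{(2)}_{a\alpha\beta m}\langle k\rangle^{m-\rho|\alpha|+\delta|\beta|}$ and using the rapid decay of $\psi_\alpha$ to localize the dominant contribution to $|\xi-k|\lesssim 1$, where $\langle k\rangle\asymp\langle\xi\rangle$, produces the target estimate on $|\partial_\xi^\alpha\partial_x^\beta a(x,\xi)|$.

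The main obstacle is the summation-by-parts step in the converse: making the above identity precise so that exactly $|\alpha|$ discrete differences of $\tilde a$ appear, thereby producing the correct loss $\langle\xi\rangle^{-\rho|\alpha|}$, and verifying that the Schwartz tails of $\chi$ and its derivatives contribute only lower-order terms. A careful localization splitting the sum into a finite near-diagonal block—handled directly by the discrete mean-value theorem—and a Schwartz tail controlled by rapid decay closes the argument, yielding a constant of the required shape $C_{a\alpha\beta m}C^{(2)}_{a\alpha\beta m}$.
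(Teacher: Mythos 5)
The paper does not actually prove this lemma: it is quoted verbatim as Corollary 4.5.7 of Ruzhansky--Turunen \cite{Ruz}, so the only comparison available is with the argument in that reference — and your sketch is essentially that argument. Your forward direction is complete and correct: writing $\Delta_{\xi_j}f(\xi)=\int_0^1\partial_{\xi_j}f(\xi+t\delta_j)\,dt$, iterating, and using that $\langle\xi+\Theta(s)\rangle\asymp\langle\xi\rangle$ uniformly for bounded shifts is exactly how the implication from derivative bounds to difference bounds is obtained. Your converse also follows the standard route (interpolation by a Schwartz kernel vanishing on $\mathbb{Z}^n\setminus\{0\}$, then summation by parts to convert $\partial_\xi^\alpha$ acting on the kernel into $\Delta^\alpha$ acting on $\tilde a$, then near-diagonal localization using rapid decay).

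The one place where your sketch is imprecise is the mechanism behind the summation-by-parts identity, which you correctly flag as the main obstacle but describe slightly wrongly: the Schwartz function $\psi_\alpha$ with $\partial_\xi^\alpha\chi=\Delta^\alpha\psi_\alpha$ is \emph{not} obtained from ``$|\alpha|$ shifts of $\chi$''. The standard construction takes $\chi=\mathscr{F}^{-1}\theta$ with $\theta$ smooth, compactly supported in a small neighbourhood of the origin (so that $\chi(0)=1$, $\chi(k)=0$ for $k\in\mathbb{Z}^n\setminus\{0\}$ by Poisson summation / direct computation), and then observes on the Fourier side that $(2\pi i x)^{\alpha}\theta(x)=\prod_j(e^{2\pi i x_j}-1)^{\alpha_j}\,\eta_\alpha(x)$ for a compactly supported smooth $\eta_\alpha$, since each ratio $2\pi i x_j/(e^{2\pi i x_j}-1)$ is smooth and nonvanishing on the support of $\theta$; setting $\psi_\alpha=\mathscr{F}^{-1}\eta_\alpha$ gives exactly the identity you need, with precisely $|\alpha|$ differences and hence the correct loss $\langle\xi\rangle^{-\rho|\alpha|}$. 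With that point supplied, your argument closes and matches the cited proof; without it, the claimed Abel rearrangement is asserted rather than justified.
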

So, we prove the following result.

\begin{theorem}[Calder\'on-Vaillancourt, discrete version]
    Let us assume that $t_m$ is a pseudo-differential operator on $\mathbb{Z}^n.$ Then, under the condition \begin{equation}\label{calderon}
  |\partial_x^\beta\Delta_\xi^\alpha m(x,\xi)|\leq C_{\alpha,\beta}(1+|\xi|)^{(|\beta|-|\alpha|)\rho},\,\,0\leq \rho<1\,\,
\end{equation} the operator $ t_m $ extends to a bounded operator on $\ell^2(\mathbb{Z}^n).$
  \end{theorem} \begin{proof}
                  Let us define the function $a$ on $\mathbb{T}^n\times \mathbb{R}^n$ by the identity $m(n',\xi)=\overline{a(\xi,-n')}.$ Then we have the symbol inequalities
                  \begin{equation}\label{calderon''}
  |\partial_x^\beta\Delta_\xi^\alpha m(x,\xi)|\leq C_{\alpha,\beta}(1+|\xi|)^{(|\beta|-|\alpha|)\rho},\,\,0\leq \rho<1.
            \end{equation} By Lemma \ref{IClases}, there exists $\tilde{a}$ on $\mathbb{T}^n\times \mathbb{R}^n$ such that $a=\tilde{a}|_{\mathbb{T}^n\times \mathbb{Z}^n}$ satisfying \begin{equation}\label{calderon'}
  |\partial_x^\beta\partial_\xi^\alpha \tilde{a}(x,\xi)|\leq C_{\alpha,\beta}(1+|\xi|)^{(|\beta|-|\alpha|)\rho},\,\,0\leq \rho<1.
            \end{equation} By the continuous Calder\'on-Vaillancourt Theorem (Theorem \ref{CalVaillTheo}) the pseudo-differential operator $T$ on $\mathbb{R}^n$ with symbol $\tilde{a}$ extends to a bounded operator on $L^2(\mathbb{R}^n).$ So, by Theorem \ref{teorema principal1} we obtain the $\ell^2(\mathbb{Z}^n)$-boundedness of $t_m.$ So, we finish the proof.
                \end{proof}

We end this paper  with the following characterization which is a discrete version of the Gohberg Lemma proved by  Molahajloo, Ruzhansky and Dasgupta (see Molahajloo \cite{s1} and Dasgupta and Ruzhansky \cite{DRgohberg}).
\begin{theorem}[Gohberg Lemma, discrete version]
   A pseudo-differential operator $t_m$ on $\mathbb{Z}^n$ extends to a compact operator on $\ell^2(\mathbb{Z}^n)$ if and only if
      \begin{eqnarray}
      \limsup_{|n'|\rightarrow\infty}\sup_{\xi\in\mathbb{T}^n}|m(n',\xi)|=0.
      \end{eqnarray}
\end{theorem}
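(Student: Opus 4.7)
My plan is to reduce the statement to the periodic Gohberg Lemma on $\mathbb{T}^n$ (proved by Molahajloo \cite{s1} and by Dasgupta--Ruzhansky \cite{DRgohberg}) via the unitary identification provided by the discrete Fourier transform $\mathscr{F}\colon\ell^2(\mathbb{Z}^n)\to L^2(\mathbb{T}^n)$. The bridge is precisely the operator identity already exploited for the Calder\'on--Vaillancourt theorem.

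First I would write the symbol in the form $m(n',\xi)=\overline{a(\xi,-n')}$, which amounts to setting $a(x,\eta):=\overline{m(-\eta,x)}$ so that $a$ is a symbol on $\mathbb{T}^n\times\mathbb{Z}^n$. The identity \eqref{Ruzidentity} borrowed from Theorem \ref{teorema principal1} then gives
$$t_m=\mathscr{F}^{-1}A^{*}\mathscr{F},$$
where $A$ is the periodic pseudo-differential operator on $\mathbb{T}^n$ with symbol $a$. Because $\mathscr{F}$ is a unitary isomorphism of Hilbert spaces, $t_m$ is compact on $\ell^2(\mathbb{Z}^n)$ if and only if $A^{*}$ is compact on $L^2(\mathbb{T}^n)$, which is in turn equivalent to the compactness of $A$ itself.

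Second, I would invoke the periodic Gohberg Lemma \cite{s1,DRgohberg}, which characterises the compactness of $A$ on $L^2(\mathbb{T}^n)$ by the vanishing condition
$$\limsup_{|\eta|\to\infty}\sup_{x\in\mathbb{T}^n}|a(x,\eta)|=0.$$
Since $|a(x,\eta)|=|m(-\eta,x)|$, the change of variables $n'=-\eta$, $\xi=x$ converts this identity into
$$\limsup_{|n'|\to\infty}\sup_{\xi\in\mathbb{T}^n}|m(n',\xi)|=0,$$
which is exactly the stated criterion.

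The main obstacle is to verify that the hypotheses for the transfer tool \eqref{Ruzidentity} and for the periodic Gohberg Lemma apply to the symbol at hand; in particular one should confirm that continuity of $\xi\mapsto m(n',\xi)$ for each $n'$ (and the attendant boundedness assumption) is enough to legitimately pass from the discrete symbolic calculus on $\mathbb{Z}^n$ to the periodic one on $\mathbb{T}^n$ and back. Once this verification is in place, the proof is a purely formal translation of a known compactness criterion across the Fourier duality $\ell^2(\mathbb{Z}^n)\leftrightarrow L^2(\mathbb{T}^n)$.
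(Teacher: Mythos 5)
Your proposal is correct and follows essentially the same route as the paper's own proof: both rest on the identity $t_m=\mathscr{F}^{-1}A^{*}\mathscr{F}$ with $m(n',\xi)=\overline{a(\xi,-n')}$, the unitarity of the discrete Fourier transform, and the periodic Gohberg Lemma of Molahajloo and Dasgupta--Ruzhansky applied to $A$ on $L^2(\mathbb{T}^n)$. Your version is in fact slightly more careful than the paper's, since you make the change of variables $n'=-\eta$ explicit and flag the need to check that the symbol hypotheses of the transfer identity and of the periodic compactness criterion are met.
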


\begin{proof}
  By equation \ref{Ruzidentity}, we have $$
  t_m=\mathscr{F}^{-1}A^{*}\mathscr{F},\,\,\,m(n',\xi)=\overline{a(\xi,-n')},$$ where $A$ is the periodic operator associated to $a.$ So, $t_m$ is compact on $\ell^2(\mathbb{Z}^n)$ if and only if $A^*$ (and hence $A$) extends to a bounded operator on $L^2(\mathbb{T}^n).$ Now, $A$ is compact on $L^2(\mathbb{T}^n)$ (see \cite{DRgohberg} with $G=\mathbb{T}^n$) if and only if
  \begin{equation}\label{gohberg}
    \limsup_{|n'|\rightarrow\infty}\sup_{\xi\in\mathbb{T}^n}|a(\xi,n')|=0.
  \end{equation} So we obtain
  \begin{equation}\label{gohberg}
    \limsup_{|n'|\rightarrow\infty}\sup_{\xi\in\mathbb{T}^n}|m(n',\xi)|=0.
  \end{equation} Thus, we end the proof.
\end{proof}

\noindent \textbf{Acknowledgments:} I would like to thank the anonymous referee for his/her remarks which helped to improve the manuscript. This project was partially supported by Pontificia Universidad Javeriana, Mathematics Department, Bogot\'a-Colombia.
\bibliographystyle{amsplain}

\end{document}